\newtheorem{thm}{Theorem}[section]
\newtheorem*{thm*}{Theorem}
\newtheorem{lem}[thm]{Lemma}
\newtheorem*{remark*}{Remarks}
\newtheorem*{defn*}{Definition}
\newtheorem*{claim*}{Claim}
\theoremstyle{definition}
 \renewcommand{\sectionmark}[1]{}
\begin{document}

\title[Complete description of rational points of Diophantine equation $ X^4 + Y^4 = Z^4 +  W^4 $] 
{Complete description of rational points of Diophantine equation $ X^4  + Y^4 = Z^4  + W^4 $}
\author[M. A. Reynya]{M. A. Reynya}
\address{Ahva St. 15/14, Haifa, Israel}

\email{misha\_371@mail.ru}


\begin{abstract}
In this paper we consider Diophantine equation $ x^4 + y^4 = z^4 + w^4 $ 

(1)We construct some family of cubic curves.We prove that every rational point on  Quartica $ x^4 + y^4 = z^4 + w^4 $
 can be mapped to a point on some curve of this family. We also prove the opposite: each rational point belonging to our family of curves can be mapped to a rational point on the Quartica.
 
(2) We find the point on our family of curves corresponding to a parametric solution of Leonard Euler. We construct several new parametric solutions of our Quartica, using a parametric solution of Leonard Euler and the algebraic operation
on the cubic curves.

(3)We present an algorithm to find all rational points on our Quartica.

\end{abstract}

\maketitle

\baselineskip 20pt
\section {Introduction} \label{sec:intro}

It is known that the Quartica $X^4 + Y^4 = Z^4 + W^4$ has infinitely many integer solutions. Leonard Euler
found the first parametric solution. It was proved in the (see \cite{Bi1}  by Swinnerton-Dyer that this diophantine equation has infinitely
many rational parametric solitions.The following authors have found various parametric solutions: (see \cite{Bi2},\cite{Hr}.
The aim of this paper is to give algorithm finding  all solutions of this Quartica.In the Section $2$ we 
show that this Quartica is equivalent to a system of two equations of the third degree in six variables.We prove that every rational point on  Quartica 
can be mapped to a rational point which is solution to this system of two equations, and the opposite: each rational point that is solution to a system of two equations can be mapped to a rational point on the Quartica. Our Quartica has  elementary solutions : $(m,n,m,n)$ and $(s,t,-t,s)$. And this means that these elementary solutions are mapped to solutions of a system of equations we constructed. So we got two solutions of our system of equations. Next using these two solutions we find a new solution of
our system of equations,as a result we get the expression depending on the 5 variables : $(m , n , r , s , t)$.This expression is the cubic   in variables $m$ , $n$ , $r$ with coefficients depending on variables $s$ and $t$.So we construct a family of cubic curves.
We prove that this family of cubic curves contains images of all rational points of our Quartica. As an example we take the Quartca point $( 59 , 158 , 133 , 134 )$ and build
cubic curve of our family which contains the image of this point. We also prove the opposite: each rational point belonging to our family of curves is mapped to a rational point on the Quartica.

In the section (3) we build several new parametric solutions of our Quartica,using a parametric solution of Leonard Euler.
At the beginning we find a point on our family of cubic curves  depending on the parameters s and t.Then we find the image 
of this point on our Quartica.That is, we get a parametric solution to our Quartica. This is the parametric solution of Leonard Euler.In accordance with our costruction every solution on Quartica  can be associated to a new solution.We call it the pair solution.
Using Mathematika 7 + we find a parametric solution to pair solution of Leonard Euler.
Because every point on the Quartica has image on the cubic curve and opposite, and on the cubic curve  there exist
associative operation we can  construct new  solutions using solution of Leonard Euler.

The process looks like this :

1.We find the image of solution of Leonard Euler on the cubic, let us denote this point $E$

2.Using the  tangent method we build the point $E\cdot E$ on the cubic.

3.We find the image of the point $E\cdot E$ on the Quartica - the new solution.

As an example we take the point $E$ - $( 59 , 158 , 133 , 134 )$.And we get 

the point $E\cdot E$ - $((-8450072351) , 520471467675 , 487934246375 , 59481958899))$, and 
 
 $((-3535404127283) , (-132758926000) , (3343735015475) , (-2363831080408))$ - the pair point.
 
 Using Mathematika 7 + we find an appropriate parametric solution  $E\cdot E$ and its pair.

 In section 4 we present the algorithm for finding all rational points on the our Quartica.


\section {Construction the family of elliptic curves} \label{sec:quin}

\begin{lem} \label{th:repr}
 The Quartica $x^4 + y^4 = z^4 + w^4$ then, and only then  has a rational point (x,y,z,w), if the system equations
$\left\{\begin{array}{clrr} a\cdot x^2-b\cdot y^2=a\cdot z^2+b\cdot w^2, \\
 b\cdot x^2+a\cdot y^2=-b\cdot z^2+a\cdot w^2\end{array} \right.$.
 has the rational point (a,b,x,y,z,w).
\end{lem}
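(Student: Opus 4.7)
The plan is to regard the two given equations as a single homogeneous linear system in the unknowns $(a,b)$, whose coefficients are quadratic forms in $(x,y,z,w)$. After moving all terms to one side, the system reads
\[
a(x^2-z^2) - b(y^2+w^2) = 0, \qquad a(y^2-w^2) + b(x^2+z^2) = 0,
\]
and thus has coefficient matrix $M = \left(\begin{smallmatrix} x^2-z^2 & -(y^2+w^2) \\ y^2-w^2 & x^2+z^2 \end{smallmatrix}\right)$. The identity that drives both directions of the lemma is the determinant computation
\[
\det M = (x^2-z^2)(x^2+z^2) + (y^2+w^2)(y^2-w^2) = (x^4+y^4) - (z^4+w^4).
\]

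For the ``if'' direction I assume a rational solution $(a,b,x,y,z,w)$ of the system with $(a,b)\neq(0,0)$. Then the column vector $(a,b)^{T}$ is a nontrivial element of the kernel of $M$, so $\det M = 0$, which is precisely $x^{4}+y^{4}=z^{4}+w^{4}$. The degenerate cases where exactly one of $a,b$ vanishes immediately force $y=w=0,\ x^{2}=z^{2}$ or $x=z=0,\ y^{2}=w^{2}$, each of which satisfies the quartic trivially.

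For the ``only if'' direction, given a rational point $(x,y,z,w)$ of the Quartica, I would exhibit an explicit witness by setting $(a,b):=(y^{2}+w^{2},\, x^{2}-z^{2})$. The first equation of the system then holds automatically, while the second simplifies to $(y^{2}+w^{2})(y^{2}-w^{2}) + (x^{2}-z^{2})(x^{2}+z^{2}) = (x^{4}+y^{4})-(z^{4}+w^{4}) = 0$, which is our hypothesis. If this candidate happens to be the zero pair, i.e.\ $y=w=0$ and $x^{2}=z^{2}$, then I fall back on the alternative $(a,b):=(x^{2}+z^{2},\, w^{2}-y^{2})$, which is again a rational null vector of $M$ and is nonzero unless $(x,y,z,w)=(0,0,0,0)$.

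The only real obstacle is the bookkeeping required to guarantee a genuinely nontrivial rational witness $(a,b)$ over $\BQ$; that is what forces the brief case-split above. Apart from that, the proof reduces entirely to the single determinant identity displayed at the start.
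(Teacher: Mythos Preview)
Your proof is correct and follows essentially the same route as the paper: both arguments rest on the single identity $(x^2-z^2)(x^2+z^2)+(y^2+w^2)(y^2-w^2)=(x^4+y^4)-(z^4+w^4)$, the paper reaching it by equating the two expressions for $a/b$ and cross-multiplying, while you package the same computation as $\det M=0$. Your version is in fact a bit more careful, since you handle the degenerate cases (where $a$, $b$, or the candidate witness vanishes) that the paper's division-based argument passes over in silence.
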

\begin{proof}
To solve the system equations we receive: $a\cdot (x^2-z^2)=b\cdot (y^2+w^2)$,
$b\cdot (x^2+z^2)=a\cdot (w^2-y^2)$,and $a/b=(y^2+w^2)/(x^2-z^2)=(x^2+z^2)/(w^2-y^2)$,
so $x^4+y^4=w^4+z^4$.
And opposite: $x^4+y^4=w^4+z^4$,$w^4-y^4=x^4-z^4$,$(w^2-y^2)\cdot (w^2+y^2)=(x^2-z^2)\cdot (x^2+z^2)$,$(y^2+w^2)/(x^2-z^2)=(x^2+z^2)/(w^2-y^2)$,we will suppose that:$(y^2+w^2)/(x^2-z^2)=(x^2+z^2)/(w^2-y^2)=a/b$,
than $(y^2+w^2)/(x^2-z^2)=a/b$,and,$(x^2+z^2)/(w^2-y^2)=a/b$, and we get the initial system equations:
$\left\{\begin{array}{clrr} a\cdot x^2-b\cdot y^2=a\cdot z^2+b\cdot w^2, \\
 b\cdot x^2+a\cdot y^2=-b\cdot z^2+a\cdot w^2\end{array} \right.$.
\end{proof}

\begin{thm} \label{th:general} 

The system of equations:

$\left\{\begin{array}{clrr} a\cdot x^2-b\cdot y^2=a\cdot z^2+b\cdot w^2, \\
 b\cdot x^2+a\cdot y^2=-b\cdot z^2+a\cdot w^2\end{array} \right.$.

has two parametric solutions :

${x=m,y=n,z=m,w=n,b=0,a=r}$,

${x=s,y=t,z=-t,w=s,b=1,a=(s^2+t^2)/(s^2-t^2)}$

\end{thm}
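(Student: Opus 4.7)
The plan is essentially to verify the two claimed parametric families by direct substitution into the system. Both families correspond to the ``elementary'' Quartica solutions $(m,n,m,n)$ and $(s,t,-t,s)$ mentioned in the introduction, and via \lemref{th:repr} any rational point on the Quartica lifts to a rational point of the system, so one expects these to work; the content of the theorem is just to pin down the specific values of $a,b$ that realize the lift.

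For the first family, I would substitute $x=z=m$, $y=w=n$, $b=0$, $a=r$ into both equations. The first equation collapses to $r\cdot m^2 = r\cdot m^2$ and the second to $r\cdot n^2 = r\cdot n^2$, both trivially satisfied for all $m,n,r$. This takes care of the degenerate branch $b=0$.

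For the second family, substitute $x=s$, $y=t$, $z=-t$, $w=s$, $b=1$ and leave $a$ as an unknown. The first equation becomes $a s^2 - t^2 = a t^2 + s^2$, equivalent to $a(s^2-t^2) = s^2+t^2$. The second equation becomes $s^2 + a t^2 = -t^2 + a s^2$, which rearranges to the same relation $a(s^2-t^2) = s^2+t^2$. Solving gives exactly $a = (s^2+t^2)/(s^2-t^2)$, which is the value claimed (assuming $s^2 \neq t^2$, which is the generic condition for this branch to make sense).

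There is no real obstacle here: the proof is a routine substitution, and the only mild subtlety is observing that the two equations of the system give the \emph{same} constraint on $a$ in the second case, which is what makes the one-parameter family consistent. I would present the calculation as two short displayed computations, one per family, and conclude.
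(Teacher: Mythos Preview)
Your proposal is correct and essentially identical to the paper's proof: the paper also substitutes the two elementary Quartica points into the system, then solves for $a$ and $b$ in each case, obtaining $b=0,\,a=r$ and $b=1,\,a=(s^2+t^2)/(s^2-t^2)$ respectively. The only cosmetic difference is that you verify the first family directly with $b=0,\,a=r$ already plugged in, whereas the paper phrases both cases as ``solve for $a,b$''; the computations are the same.
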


\begin{proof}

Points:

${x=m,y=n,z=m,w=n}$

${x=s,y=t,z=-t,w=s}$

are solutions of Quartica $x^4 + y^4 = z^4 + w^4$,therefore for Lemma 2.1
there must be a solution for system of equations:
$\left\{\begin{array}{clrr} a\cdot x^2-b\cdot y^2=a\cdot z^2+b\cdot w^2, \\
 b\cdot x^2+a\cdot y^2=-b\cdot z^2+a\cdot w^2\end{array} \right.$.
To solve this system for $a$ and $b$ for every point we can to prove the Theorem 2.2.

(1)
$\left\{\begin{array}{clrr} a\cdot m^2-b\cdot n^2=a\cdot m^2+b\cdot n^2, \\
 b\cdot m^2+a\cdot n^2=-b\cdot m^2+a\cdot n^2\end{array} \right.$.
 
$a=r , b=0$.

(2)
$\left\{\begin{array}{clrr} a\cdot s^2-b\cdot t^2=a\cdot t^2+b\cdot s^2, \\
 b\cdot s^2+a\cdot t^2=-b\cdot t^2+a\cdot s^2\end{array} \right.$.
  
$a=(s^2+t^2)/(s^2-t^2), b=1$.

\end {proof}

Now we have a system of two equations of the third degree,and two solutions for this system.

\begin{thm} \label{th:general} 

The system of equations:

$\left\{\begin{array}{clrr} a\cdot x^2-b\cdot y^2=a\cdot z^2+b\cdot w^2, \\
 b\cdot x^2+a\cdot y^2=-b\cdot z^2+a\cdot w^2\end{array} \right.$.

has  parametric solution:

$x=(m\cdot g + s),y=(n\cdot g + t),z=(m\cdot g - t),w=(n\cdot g + s),b=(0\cdot g + 1),a=(r\cdot g + (s^2+t^2)/(s^2-t^2))$,

where

 $g = (2\cdot m\cdot s^2-2\cdot n\cdot s^2+r\cdot s^3-r\cdot s^2\cdot t+2\cdot m\cdot t^2+2\cdot n\cdot t^2-r\cdot s\cdot t^2+r\cdot t^3)/(2\cdot (s-t)\cdot (n^2-m\cdot r\cdot s - m\cdot r \cdot t))$ from the first equation of the system         ,

and

 $g = (-2\cdot m\cdot s^2+2\cdot n\cdot s^2+r\cdot s^3+r\cdot s^2\cdot t+2\cdot m\cdot t^2+2\cdot n\cdot t^2-r\cdot s\cdot t^2-r\cdot t^3)/(2\cdot (s+t)\cdot (m^2-n\cdot r\cdot s + n\cdot r \cdot t))$ from the second equation of the system

\end{thm}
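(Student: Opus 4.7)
The plan is to apply the classical secant–line construction to the common zero locus of the two cubic forms $F_1 = a x^2 - b y^2 - a z^2 - b w^2$ and $F_2 = b x^2 + a y^2 + b z^2 - a w^2$ in the six coordinates $(x,y,z,w,a,b)$. Theorem 2.2 supplies two rational points on this variety, namely
$v_1 = (m,n,m,n,r,0)$ and $v_2 = (s,t,-t,s,(s^2+t^2)/(s^2-t^2),1)$. The affine parametrisation written in the statement is precisely $L(g) = v_2 + g \cdot v_1$, so that $L(0) = v_2$ and the direction at infinity is $v_1$; note that the $b$-coordinate stays equal to $1$ along $L$, which places $v_1$ on the hyperplane at infinity of the chart $b=1$.

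First I would substitute $L(g)$ into each cubic form. Each $F_i(L(g))$ is, a priori, a polynomial in $g$ of degree at most three. Its constant term equals $F_i(v_2)$ and its leading coefficient equals $F_i(v_1)$; both vanish by Theorem 2.2, because $v_1$ and $v_2$ satisfy the whole system. Consequently
$F_i(L(g)) = g\bigl(\alpha_i\, g + \beta_i\bigr)$
for certain polynomials $\alpha_i, \beta_i$ in $m,n,r,s,t$, and the third intersection of the secant with the surface $\{F_i = 0\}$ occurs at $g_i = -\beta_i/\alpha_i$.

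The remainder of the argument is a direct expansion. I would plug $x = mg+s$, $y = ng+t$, $z = mg-t$, $w = ng+s$, $a = rg + (s^2+t^2)/(s^2-t^2)$, $b = 1$ into $F_1$ and $F_2$, collect the coefficients of $g^1$ and $g^2$ (having already checked that the $g^0$ and $g^3$ coefficients are zero), and solve the linear equations $\alpha_i g + \beta_i = 0$. The resulting ratios should reproduce exactly the two rational expressions in the statement, with denominators factorising as $2(s-t)(n^2 - mrs - mrt)$ and $2(s+t)(m^2 - nrs + nrt)$ respectively. The only real obstacle is arithmetic bookkeeping: the expansions produce a large number of monomials, and the final step is to recognise the factorisations of the numerator and denominator. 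A symbolic algebra package carries out the calculation at once.

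Finally, I would emphasise that the two resulting rational expressions for $g$ are not forced to agree. Setting them equal yields a single polynomial relation, cubic in $(m,n,r)$ with coefficients in $s,t$, and this is precisely the family of cubic curves that the paper uses to parametrise the rational points of the quartic.
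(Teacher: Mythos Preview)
Your proposal is correct and follows essentially the same route as the paper: substitute the parametric line $L(g)=v_2+g\cdot v_1$ into each of the two cubic forms, observe that $g=0$ and $g=\infty$ are already roots (because $v_2$ and $v_1$ satisfy the system by Theorem~2.2), and solve the remaining linear factor in $g$ for each equation. The paper's proof is much terser, but your secant-line phrasing and the explicit identification of the $g^0$ and $g^3$ coefficients with $F_i(v_2)$ and $F_i(v_1)$ are exactly the mechanism it is invoking.
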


\begin{proof}

Substitute the expressions:

$x=(m\cdot g + s),y=(n\cdot g + t),z=(m\cdot g - t),w=(n\cdot g + s),b=(0\cdot g + 1),a=(r\cdot g + (s^2+t^2)/(s^2-t^2))$,

in each  of the equations of our system.

Because   $(x=m,y=n,z=m,w=n,b=0,a=r)$ and

$(x=s,y=t,z=-t,w=s,b=1,a=(s^2+t^2)/(s^2-t^2))$

are solutions to each of the equations ,we receive two linear equations for $g$,because $g=0$ and $g=\infty$ are the roots
of each of the equations.

If we compute value of $g$ for each of the equations, we get the expressions for $g$ specified in the condition 
of the theorem.

\end{proof}

Now it is clear that condition for the existence of solution for system of equations and for Lemma 2.1 for equation
$x^4+y^4=z^4+w^4$  is the equality of two expressions for $g$ from Theorema 2.3.

Consider the expression resulting from the equality of two expressions for $g$:

$(2\cdot m\cdot s^2-2\cdot n\cdot s^2+r\cdot s^3-r\cdot s^2\cdot t+2\cdot m\cdot t^2+2\cdot n\cdot t^2-r\cdot s\cdot t^2+r\cdot t^3)/(2\cdot (s-t)\cdot (n^2-m\cdot r\cdot s - m\cdot r \cdot t))$ = $(-2\cdot m\cdot s^2+2\cdot n\cdot s^2+r\cdot s^3+r\cdot s^2\cdot t+2\cdot m\cdot t^2+2\cdot n\cdot t^2-r\cdot s\cdot t^2-r\cdot t^3)/(2\cdot (s+t)\cdot (m^2-n\cdot r\cdot s + n\cdot r \cdot t))$,

or 

$(2\cdot m\cdot s^2-2\cdot n\cdot s^2+r\cdot s^3-r\cdot s^2\cdot t+2\cdot m\cdot t^2+2\cdot n\cdot t^2-r\cdot s\cdot t^2+r\cdot t^3)\cdot (2\cdot (s+t)\cdot (m^2-n\cdot r\cdot s + n\cdot r \cdot t))$ = $(-2\cdot m\cdot s^2+2\cdot n\cdot s^2+r\cdot s^3+r\cdot s^2\cdot t+2\cdot m\cdot t^2+2\cdot n\cdot t^2-r\cdot s\cdot t^2-r\cdot t^3)\cdot (2\cdot (s-t)\cdot (n^2-m\cdot r\cdot s - m\cdot r \cdot t))$.

This expression is the cubic in variables $m$ , $n$ , $r$ with coefficients depending on variables $s$ and $t$.

\begin{thm} \label{th:general} 
For any solution $x$,$y$,$z$,$w$
of the equation $x^4+y^4=z^4+w^4$ there are such variable values $m$ , $n$ , $r$ , $s$ , $t$ for the given values $s$ and $t$
variable values $m$ , $n$ , $r$ are the solution of our cubic.

\end{thm}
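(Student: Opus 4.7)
The plan is to exhibit the correspondence by explicit inversion: starting from any rational solution $(x,y,z,w)$ of the quartic, I will produce values $m,n,r,s,t$, together with an auxiliary scalar $g\neq 0$, so that the parametric substitution of Theorem~2.3 recovers $(x,y,z,w)$ and simultaneously satisfies both equations of the system at this chosen $g$. Once that is arranged, the two linear formulas for $g$ from Theorem~2.3 must both equal our $g$, and hence equal each other, which is precisely the cubic relation defining the family.

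First I would invoke Lemma~2.1 to obtain a rational $(a,b,x,y,z,w)$ solving the system, and use the freedom to scale $(a,b)$ to normalize $b=1$. In the generic range $x^{2}\neq z^{2}$ this gives $a=(y^{2}+w^{2})/(x^{2}-z^{2})$. Next I would fix an arbitrary nonzero $g$ (for concreteness $g=1$) and read off $m,n,s,t$ from the four linear equations $x=mg+s$, $y=ng+t$, $z=mg-t$, $w=ng+s$. Suitable sums and differences give
\begin{align*}
s+t &= x-z, & s-t &= w-y, \\
2mg &= (x+z)-(w-y), & 2ng &= (y+w)-(x-z),
\end{align*}
determining all four unknowns. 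Finally, $r$ is obtained from the linear relation $a=rg+(s^{2}+t^{2})/(s^{2}-t^{2})$.

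The conceptual step is exactly the one underlying Theorem~2.3. After inserting the parametric expressions into either equation of the system, the result is a polynomial in $g$ whose constant term vanishes (the elementary solution $(s,t,-t,s)$ with $b=1$ corresponds to $g=0$) and whose top coefficient vanishes (the elementary solution $(m,n,m,n)$ with $b=0$ is the limit $g\to\infty$). Each equation therefore factors as $g$ times a polynomial of degree one in $g$, whose unique nonzero root is exactly the expression for $g$ displayed in Theorem~2.3. By construction, our chosen nonzero $g$ is a common root of both polynomials, so these unique nonzero roots coincide; this coincidence is the cubic equation in $m,n,r$ with coefficients in $s,t$, and $(m,n,r)$ therefore lies on our cubic for the chosen $s$ and $t$.

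The main obstacle is the treatment of the degenerate configurations. When $x^{2}=z^{2}$, the normalization $b=1$ used above is not immediately available, and one must either swap the roles of the two elementary solutions in the parametrization or work projectively in the pair $(a,b)$. Similarly, $s=\pm t$, equivalently $x-z=\pm(w-y)$, makes the denominator $s^{2}-t^{2}$ vanish, and the elementary solutions themselves lie in the base locus of the family and need to be verified as limits rather than by direct substitution. Once these boundary cases are dispatched by an ad~hoc argument, the generic construction above is entirely mechanical, and by the explicit formulas of Step~2 it produces rational $m,n,r,s,t$ whenever the input $(x,y,z,w)$ was rational.
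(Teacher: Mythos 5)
Your proposal is correct and follows essentially the same route as the paper's own proof: invoke Lemma~2.1 with the normalization $b=1$, set $g=1$ and solve the four linear relations $x=mg+s$, $y=ng+t$, $z=mg-t$, $w=ng+s$ for $m,n,s,t$, take $r=a-(s^{2}+t^{2})/(s^{2}-t^{2})$, and conclude that since $g=1$ is a common nonzero root of both linear-in-$g$ factors from Theorem~2.3, the two expressions for $g$ coincide, which is exactly the cubic relation. Your write-up is in fact somewhat more explicit than the paper's (the inversion formulas for $m,n,s,t$ and the flagged degenerate cases $x^{2}=z^{2}$, $s=\pm t$), but the underlying argument is the same.
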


\begin{proof}
Suppose that ($x$,$y$,$z$,$w$) the rational point of equation, than for Lemma 2.1 there exist $a$ and $b$ so that ($x$,$y$,$z$,$w$,$a$ $b$) the rational point of system equations.

Consider the system equations:$x=(m\cdot g + s),y=(n\cdot g + t),z=(m\cdot g - t),w=(n\cdot g + s)$
Suppose that $g=1$,The discriminant of this system is not equal to $0$,therefore $m,n,s,t$ exist.
But  the points ($m$,$n$,$m$,$n$) and ($s$,$t$,$-t$,$s$) are the solutions of Quartica $x^4+y^4=z^4+w^4$,and therefore
the points :

(${m,n,m,n,0,r}$),

(${s,t,-t,s,1,(s^2+t^2)/(s^2-t^2)}$)

are the solutions of system of equations.Now in in accordance with our construction the
$x,y,z,w,b=1,a=(x^2+z^2)/(w^2-y^2)$ are the solution of our system  ,on the other hand because there are such $m,n,s,t$ that
$x=(m\cdot g + s),y=(n\cdot g + t),z=(m\cdot g - t),w=(n\cdot g + s)$ for $g=1$, the point $(x=(m\cdot g + s),y=(n\cdot g + t),z=(m\cdot g - t),w=(n\cdot g + s),b=0\cdot g + 1,a=r\cdot g + (s^2+t^2)/(s^2-t^2))$ is the solution of the system if $g=1$
and $r=(x^2+z^2)/(w^2-y^2)-(s^2+t^2)/(s^2-t^2)$,and therefore for any solution $x$,$y$,$z$,$w$
of the equation $x^4+y^4=z^4+w^4$ there are such variable values $m$ , $n$ , $r$ , $s$ , $t$ for the given values $s$ and $t$
variable values $m$ , $n$ , $r$ are the solution of our cubic.

\end{proof}

EXAMPLE.

Now we have solution of Quartica $x^4+y^4=z^4+w^4$ : ($x=59$ , $y=158$ , $z=133$ , $w=134$).
Show that there is a  cubic curve to which this point belongs.

Consider the system of equations:$(m\cdot g + s)=59,y=(n\cdot g + t)=158,z=(m\cdot g - t)=133,w=(n\cdot g + s)=134$.
Suppose $g=1$,then $m=108, n=183, s=-49, t=-25$.
    
If we substitute into the equation of the cubic the values  of $s$ and $t$ we receive:

$(-1513\cdot m + 888\cdot n + 10656\cdot r)\cdot (37\cdot (m^2 + 24\cdot n \cdot r) = (888\cdot m - 1513\cdot n + 32856\cdot r)\cdot (12\cdot (n^2 + 74\cdot m\cdot r))$

If we  substitute in this cubic  values $m=108, n=183$ we receive for $r$ equation of second degree ,which must have one rational root of the $g=1$. Since the equation is of second degree there is another rational root corresponding to the new solution.

Equation:$72339650100 + 8604986400\cdot r - 1419379200\cdot r^2 = 0$ has two rational roots:

$r1 = -2797/592 , r2 = 3193/296$

We substitute values for $m , n , s , t$ to Quartica and receive:

$(108\cdot g + (-49))^4 + (183\cdot g + (-25))^4 - (108\cdot g - (-25))^4 - (183\cdot g + (-49))^4 = 0$,or

$17107200\cdot g - 232567200\cdot g^2 + 215460000\cdot g^3 = 0$
      
This equation has two rational roots :the root $g=1$ corresponding to the point ($x=59$ , $y=158$ , $z=133$ , $w=134$) on the
Quartica and the point($m=108,n=183,r=-2797/592$)on the cubic.And the root $g_1=3193/296$,corresponding to some new point on the 
Quartica:($x_1=-134413,y_1=-34813,z_1=111637,w_1=-114613$),and the point ($m_1=108,n_1=183,r_1=3193/296$) on the cubic.

So picking an arbitrary rational point ($x=59$ , $y=158$ , $z=133$ , $w=134$) on the Quartica we found  the cubic to which  belong the rational point ($m=108,n=183,r=-2797/592$), where  ($m,n,r$) are functions from ($x,y,z,w$) and found  one more point on the Quartica ($x_1=-134413,y_1=-34813,z_1=111637,w_1=-114613$),corresponding to the point ($m_1=108,n_1=183,r_1=3193/296$) on the same cubic.

So the family of cubics is:

$(2\cdot m\cdot s^2-2\cdot n\cdot s^2+r\cdot s^3-r\cdot s^2\cdot t+2\cdot m\cdot t^2+2\cdot n\cdot t^2-r\cdot s\cdot t^2+r\cdot t^3)\cdot (2\cdot (s+t)\cdot (m^2-n\cdot r\cdot s + n\cdot r \cdot t))$ = $(-2\cdot m\cdot s^2+2\cdot n\cdot s^2+r\cdot s^3+r\cdot s^2\cdot t+2\cdot m\cdot t^2+2\cdot n\cdot t^2-r\cdot s\cdot t^2-r\cdot t^3)\cdot (2\cdot (s-t)\cdot (n^2-m\cdot r\cdot s - m\cdot r \cdot t))$.
Since each point in our family of cubic curves match point-solution of cubic equations then by Lemma 2 the following is true: every point in our family of cubic curves match point
on the surface
containing images of all rational points of our Quartica.Since each point in our family of cubic curves match point-solution of cubic equations then Lemma 2 the following is true: every point in our family of cubic curves match point
on the Quartica.

\section {Construction of parametric solutions of Quartica $X^4 + Y^4 = Z^4 + W^4$} \label{sec:gen}

Consider the family of cubics constructed in the previous chapter:


$(2\cdot m\cdot s^2-2\cdot n\cdot s^2+r\cdot s^3-r\cdot s^2\cdot t+2\cdot m\cdot t^2+2\cdot n\cdot t^2-r\cdot s\cdot t^2+r\cdot t^3)\cdot (2\cdot (s+t)\cdot (m^2-n\cdot r\cdot s + n\cdot r \cdot t))$ = $(-2\cdot m\cdot s^2+2\cdot n\cdot s^2+r\cdot s^3+r\cdot s^2\cdot t+2\cdot m\cdot t^2+2\cdot n\cdot t^2-r\cdot s\cdot t^2-r\cdot t^3)\cdot (2\cdot (s-t)\cdot (n^2-m\cdot r\cdot s - m\cdot r \cdot t))$.

\begin{thm} \label{th:general} 
This family of cubics  contains the point: ($m = (-(( s^5 - s^3\cdot t^2 - s^2\cdot t^3 + t^5)/(4\cdot s^2\cdot t^2))), n = (-(( 
   s^5 - s^3\cdot t^2 + s^2\cdot t^3 - t^5)/(4\cdot s^2\cdot t^2))), r = 1$)
   
\end{thm}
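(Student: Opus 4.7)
The approach is direct algebraic verification. Conceptually, this point arises as the image of Euler's parametric solution of the quartic under the correspondence of Theorem~2.4 (specialized at $g = 1$); inverting the relations $x = m+s$, $y = n+t$, $z = m-t$, $w = n+s$ yields $m = (x+y+z-w)/2$, $n = (-x+y+z+w)/2$, $s = (x-y-z+w)/2$, $t = (x+y-z-w)/2$, and then $r$ is forced by $r = (x^2+z^2)/(w^2-y^2) - (s^2+t^2)/(s^2-t^2)$. With a suitable normalization of Euler's quadruple one obtains precisely $r = 1$ and the rational functions of $s, t$ stated in the theorem; this motivates the formula and tells us what to verify.

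To carry out the verification, substitute $r = 1$ and the given expressions for $m$ and $n$ into the four factors
\begin{align*}
A &= 2ms^2 - 2ns^2 + s^3 - s^2 t + 2mt^2 + 2nt^2 - st^2 + t^3, \\
B &= 2(s+t)(m^2 - ns + nt), \\
C &= -2ms^2 + 2ns^2 + s^3 + s^2 t + 2mt^2 + 2nt^2 - st^2 - t^3, \\
D &= 2(s-t)(n^2 - ms - mt),
\end{align*}
clear the common denominator $4 s^2 t^2$ appearing in $m$ and $n$, and check the polynomial identity $A \cdot B = C \cdot D$ in $\mathbb{Z}[s, t]$.

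A useful shortcut is a symmetry observation: the cubic equation $AB = CD$ is invariant under the involution $(m, n, t) \mapsto (n, m, -t)$, since a direct inspection shows $A \mapsto C$ and $B \mapsto D$ under this substitution. The proposed point is itself a fixed point of this involution, because replacing $t$ by $-t$ in the stated formulas for $m$ and $n$ simply exchanges them. This halves the verification work and serves as a built-in consistency check.

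The only real obstacle is the bulk of the computation. After clearing denominators, both $A \cdot B$ and $C \cdot D$ become polynomials in $s, t$ of total degree roughly $12$, so expanding them by hand is unpleasant though entirely routine; the identity is most cleanly verified in a computer algebra system such as Mathematica, as the author uses elsewhere in the paper. No conceptual difficulty arises beyond careful bookkeeping of signs and the telescoping of $s$--$t$ monomials.
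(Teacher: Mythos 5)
Your proposal is correct in substance but goes by a heavier route than the paper. The paper's proof is not a verification of a big polynomial identity at all: it simply sets the two \emph{linear} factors of the cubic equal to zero, i.e.\ it solves the system $A=0$, $C=0$ (in your notation) with $r=1$; this is a linear system in $m,n$ whose unique solution is exactly the stated point, and since both linear factors vanish there, both sides of the cubic $A\cdot B = C\cdot D$ are identically $0$ and nothing remains to expand. Your plan --- substitute the point, clear the denominator $4s^2t^2$, and check $A\cdot B = C\cdot D$ as a polynomial identity in $s,t$ (by CAS if need be) --- does prove the theorem, and your symmetry observation that $(m,n,t)\mapsto(n,m,-t)$ sends $A\mapsto C$, $B\mapsto D$ and fixes the point is valid and a nice consistency check. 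But the ``degree $12$ expansion'' you worry about never materializes: at this point $A$ and $C$ each vanish identically as rational functions of $s,t$, so the identity collapses to $0=0$; checking just $A=0$ and $C=0$ (about four lines of algebra: $A\pm C=0$ give $m\pm n$) is both the shortest verification and the paper's actual argument, since the point is by construction the intersection of the two linear components of the cubic. Your motivational paragraph about pulling the point back from Euler's quadruple via $g=1$ is harmless but logically inverted relative to the paper, which derives Euler's solution \emph{from} this point rather than the other way around, so it should not be leaned on as part of the proof.
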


\begin{proof}
To prove the theorem it is enough to equate to 0 two expressions in the left and right side of the equation for a family of curves.
We get a system of equations:
 
$\left\{\begin{array}{clrr} (2\cdot m\cdot s^2-2\cdot n\cdot s^2+r\cdot s^3-r\cdot s^2\cdot t+2\cdot m\cdot t^2+2\cdot n\cdot t^2-r\cdot s\cdot t^2+r\cdot t^3)=0, \\
(-2\cdot m\cdot s^2+2\cdot n\cdot s^2+r\cdot s^3+r\cdot s^2\cdot t+2\cdot m\cdot t^2+2\cdot n\cdot t^2-r\cdot s\cdot t^2-r\cdot t^3)=0\end{array} \right.$.

Solving this system, provided that the $r=1$,we prove the theorem

\end{proof}

Now we have the point on the family of cubics.Now construct a point on Quartica.For this we will solve the equation:

$(m\cdot g+s)^4+(n\cdot g+t)^4-(m\cdot g-t)^4-(n\cdot g +s)^4 = 0$

Or substituting the values for $m$ and $n$:

$((-(( s^5 - s^3\cdot t^2 - s^2\cdot t^3 + t^5)/(4\cdot s^2\cdot t^2)))\cdot g+s)^4+((-(( 
   s^5 - s^3\cdot t^2 + s^2\cdot t^3 - t^5)/(4\cdot s^2\cdot t^2)))\cdot g+t)^4-((-(( s^5 - s^3\cdot t^2 - s^2\cdot t^3 + t^5)/(4\cdot s^2\cdot t^2)))\cdot g-t)^4-((-(( 
   s^5 - s^3\cdot t^2 + s^2\cdot t^3 - t^5)/(4\cdot s^2\cdot t^2)))\cdot g +s)^4 = 0$

Using the program Mathematika 7+ we receive:

$g = (-((12\cdot s^4\cdot t^4)/((s^2 - t^2)\cdot (s^6 - 2\cdot s^4\cdot t^2 - 2\cdot s^2\cdot t^4 + t^6))))$

And, accordingly:

$x = s\cdot (s^6 + s^4\cdot t^2 - 2\cdot s^2\cdot t^4 - 3\cdot s\cdot t^5 + t^6)$

$y = t\cdot (s^6 + 3\cdot s^5\cdot t - 2\cdot s^4\cdot t^2 + s^2\cdot t^4 + t^6)$

$z = -t\cdot (s^6 - 3\cdot s^5\cdot t - 2\cdot s^4\cdot t^2 + s^2\cdot t^4 + t^6)$

$w = s\cdot (s^6 + s^4\cdot t^2 - 2\cdot s^2\cdot t^4 + 3\cdot s\cdot t^5 + t^6)$

This is the solution of Leonard Euler

In an example considered above  $ 59,158,133,134$ corresponds to the solution of Leonard Euler for $s=2$,$t=1$:
$(x=134,y=133,z=59,w=158)$.However, we got a new solution ($x_1=-134413,y_1=-34813,z_1=111637,w_1=-114613$)Call it a pair of the solution of Leonard Euler.To find the solution in parametric form we will carry out the following:

Solve the system of equations:

$\left\{\begin{array}{clrr} ((M + S) = -t\cdot (s^6 - 3\cdot s^5\cdot t - 2\cdot s^4\cdot t^2 + s^2\cdot t^4 + t^6), \\$
$(N + T) = s\cdot (s^6 + s^4\cdot t^2 - 2\cdot s^2\cdot t^4 + 3\cdot s\cdot t^5 + t^6), \\ (M - T) == t\cdot (s^6 + 3\cdot$ $s^5\cdot t - 2\cdot s^4\cdot t^2 + s^2\cdot t^4 + t^6), \\(N + S) == s\cdot (s^6 + s^4\cdot t^2 - 2\cdot s^2\cdot t^4 -$ $3\cdot s\cdot t^5 + t^6) \end{array} \right.$

Solving this system we get:

$M=3\cdot (s^5\cdot t^2 + s^2\cdot t^5), N=(s^7 + s^6\cdot t + s^5\cdot t^2 - 2\cdot s^4\cdot t^3 - $
$   2\cdot s^3\cdot t^4 + s^2\cdot t^5 + s\cdot t^6 + t^7), S=(-s^6\cdot t + 2\cdot s^4\cdot t^3 - $
$   4\cdot s^2\cdot t^5 - t^7), T=(-s^6\cdot t + 2\cdot s^4\cdot t^3 + 2\cdot s^2\cdot t^5 - t^7)$

Now we solve  equation:

$(M\cdot g+S)^4+(N\cdot g+T)^4-(M\cdot g-T)^4-(N\cdot+S)^4=0$

In accordance with our construction,this equation has the roots:

$g_1=0,g_2=\infty,g_3=1$,$g_3=1$ - corresponds to the solution of Euler,and $g_4$ we compute with Mathematika 7+:

$g_4=(-s^{13} + 2\cdot s^{12}\cdot t + 4\cdot s^{11}\cdot t^2 - 8\cdot s^{10}\cdot t^3 + 8\cdot s^7\cdot t^6 + 2\cdot s^6\cdot$ $t^7 - $
$ 18\cdot s^5\cdot t^8 + 18\cdot s^4\cdot t^9 - 14\cdot s^3\cdot t^{10} + 10\cdot s^2\cdot t^{11} - s\cdot t^{12} + $
$ 2\cdot t^{13})/(s^{13} + s^{12}\cdot t - 4\cdot s^{11}\cdot t^2 + 14\cdot s^{10}\cdot t^3 - 18\cdot s^9\cdot t^4 - $
$ 9\cdot s^8\cdot t^5 + 28\cdot s^7\cdot t^6 - 8\cdot s^6\cdot t^7 - 4\cdot s^3\cdot t^{10} + 5\cdot s^2\cdot t^{11} + $
$ s\cdot t^{12} + t^{13})$

Substituting the value of $g_4$  in each of the expressions:$(M\cdot g_4+S),(N\cdot g_4+T),(M\cdot g_4-T),(N\cdot g_4+S)$
we get a new parametric solution :

$x=(-t\cdot (s^{18} + 3\cdot s^{17}\cdot t - 15\cdot s^{16}\cdot t^2 + 15\cdot s^{15}\cdot t^3 + 6\cdot s^{14}\cdot t^4 -$ 
$    45\cdot s^{13}\cdot t^5 + 82\cdot s^{12}\cdot t^6 - 15\cdot s^{11}\cdot t^7 - 123\cdot s^{10}\cdot t^8 + $
    $        171\cdot s^9\cdot t^9 - 159\cdot s^8\cdot t^{10} + 159\cdot s^7\cdot t^{11} - 98\cdot s^6\cdot t^{12} + $
        $    30\cdot s^5\cdot t^{13} - 12\cdot s^4\cdot t^{14} + 3\cdot s^2\cdot t^{16} + t^{18}))$
    
 $  y= ((-s^{19} + s^{18}\cdot t + 3\cdot s^{17}\cdot t^2 + 3\cdot s^{16}\cdot t^3 - 21\cdot s^{15}\cdot t^4 +$
    $  12\cdot s^{14}\cdot t^5 + 44\cdot s^{13}\cdot t^6 - 86\cdot s^{12}\cdot t^7 + 93\cdot s^{11}\cdot t^8 - $
    $  87\cdot s^{10}\cdot t^9 - 3\cdot s^9\cdot t^{10} + 135\cdot s^8\cdot t^{11} - 142\cdot s^7\cdot t^{12} + $
    $  100\cdot s^6\cdot t^{13} - 72\cdot s^5\cdot t^{14} + 36\cdot s^4\cdot t^{15} - 12\cdot s^3\cdot t^{16} + $
    $  9\cdot s^2\cdot t^{17} - s\cdot t^{18} + t^{19}))$
   
 $ z= (t\cdot (s^{18} - 3\cdot s^{17}\cdot t + 3\cdot s^{16}\cdot t^2 + 21\cdot s^{15}\cdot t^3 - 60\cdot s^{14}\cdot t^4 + $
   $   27\cdot s^{13}\cdot t^5 + 58\cdot s^{12}\cdot t^6 - 75\cdot s^{11}\cdot t^7 + 57\cdot s^{10}\cdot t^8 -$ 
      $  63\cdot s^9\cdot t^9 + 63\cdot s^8\cdot t^{10} - 87\cdot s^7\cdot t^{11} + 100\cdot s^6\cdot t^{12} - $
      $  66\cdot s^5\cdot t^{13} + 36\cdot s^4\cdot t^{14} - 18\cdot s^3\cdot t^{15} + 9\cdot s^2\cdot t^{16} + t^{18}))$
        
 $w= ((-s^{19} + s^{18}\cdot t + 3\cdot s^{17}\cdot t^2 + 3\cdot s^{16}\cdot t^3 - 21\cdot s^{15}\cdot t^4 + $
    $6\cdot s^{14}\cdot t^5 + 44\cdot s^{13}\cdot t^6 - 62\cdot s^{12}\cdot t^7 - 15\cdot s^{11}\cdot t^8 + $
      $129\cdot s^{10}\cdot t^9 - 165 \cdot s^9\cdot t^{10} + 129\cdot s^8\cdot t^{11} - 88\cdot s^7\cdot t^{12} +$
       $46\cdot s^6\cdot t^{13} - 18\cdot s^5\cdot t^{14} + 6\cdot s^4\cdot t^{15} - 12\cdot s^3\cdot t^{16} + 3\cdot s^2\cdot $
       $ t^{17} - s\cdot t^{18} + t^{19}))$

Because every point on the Quartica has image on the cubic curve and opposite, and on the cubic curve  there exist
an algebraic operation we can  construct new  solutions,using solution of Leonard Euler.

The process looks like this :

1.We find the image of solution of Leonard Euler on the cubic,let us denote this point $E$

2.Using the  tangent method we build the point $E\cdot E$ on the cubic.

3.We find the image of the point $E\cdot E$ on the Quartica - the new solution.

Example.
We have the ratonal point on the Quartica : $59 , 158 , 133 , 134$

The cubic containg image of this rational point was constructed  in the 2 chapter of this paper.

This is the cubic : $(-1513\cdot m + 888\cdot n + 10656\cdot r)\cdot (37\cdot (m^2 + 24\cdot n \cdot r) = (888\cdot m -$ $1513\cdot n + 32856\cdot r)\cdot (12\cdot (n^2 + 74\cdot m\cdot r))$

The image point is: $m=108,n=183,r=(-2797/592)$(from chapter 2)

Now we can  use the tangent method for this cubic curve:

We substitude :$m=108$ to $M=(108\cdot g_1+k)$ , $n=183$ to $N=(183\cdot g_1+1)$ , $r=(-2797/592)$ to $R=((-2797/592)\cdot g_1+2)$ :

 $(-1513\cdot (108\cdot g_1+k) + 888\cdot (183\cdot g_1+1) + 10656\cdot ((-2797/592)\cdot g_1+2))\cdot (37\cdot ((108\cdot$ $g_1+k)^2 + 24\cdot (183\cdot g_1+1) \cdot ((-2797/592)\cdot g_1+2)) = (888\cdot (108\cdot g_1+k) - 1513\cdot (183\cdot g_1+1)$ $+ 32856\cdot r)\cdot (12\cdot ((183\cdot g_1+1)^2 + 74\cdot (108\cdot g_1+k)\cdot ((-2797/592)\cdot g_1+2)))$

So we receive:

$-((9\cdot g_1^2\cdot (-20155494924 + 562635949\cdot k) + $
$  72\cdot g_1 (308932483 - 21347816\cdot k + 570133\cdot k^2) +$ 
$  4\cdot (-38656812 + 116715168\cdot k + 755688\cdot k^2 + 55981\cdot k^3))/($
$ 3\cdot (-8 - 130800\cdot g_1 + 34164\cdot g_1^2 - 1184\cdot k + 2797\cdot g_1\cdot k) (672417\cdot g_1^2 - $
$    3\cdot g_1\cdot (213875 + 5328\cdot k) - 74\cdot (48 + k^2))))$

 We can choose the $k$ so that the coefficient for $g_1^2$ is equal to $0$:

$(-20155494924 + 562635949\cdot k) = 0$

Solving this equation we receive: $k = 20155494924/562635949$,and solving equation for $g_1$ :

$g_1 = (-3431129689319806/2216545393509777)$

Now we receive values from $M , N , R$:

$M = (-97052654280770532/738848464503259)$ , 

$N = (-208560062584004907/738848464503259)$,

$R = 6110629743471536675/656097436478893992$.

This is the image point on the cubic corresponding the new point on the Quartika.
 Now we can find this new point,solving the equation:  

$((-(97052654280770532/738848464503259))\cdot s + (-49))^4$

$+ ((-(208560062584004907/738848464503259))\cdot  s + (-25))^4$

$- ((-(97052654280770532/738848464503259))\cdot s - (-25))^4$

$- (-(208560062584004907/738848464503259))\cdot  s + (-49))^4 = 0$

This equation has two rational roots :

$s_1=(-15645116235856509325/187352780702663748309)$,and 

$s_2=(-258596962576140650/711525553297861767)$

This two roots correspond to the two points on the Quartica:

$(x_1=(-3535404127283) , y_1=(-132758926000) , z_1=(3343735015475) , w_1=(-2363831080408))$ , for $s_1$

$(-3535404127283)^4 + (-132758926000)^4 = 3343735015475^4 + (-2363831080408)^4$

and
  
$(x_2=(-8450072351) , y_2=520471467675 , z_2=487934246375 ,w_2=359481958899)$,for $s_2$

$(-8450072351)^4 + 520471467675^4 = 487934246375^4 + 359481958899^4$

Now we  return to our cubic :

This is the cubic : $(-1513\cdot m + 888\cdot n + 10656\cdot r)\cdot (37\cdot (m^2 + 24\cdot n \cdot r) = (888\cdot m - 1513\cdot n + 32856\cdot r)\cdot (12\cdot (n^2 + 74\cdot m\cdot r))$

We substitute the new values of $m=(-97052654280770532/738848464503259)$ ,

$n=(-208560062584004907/738848464503259)$, 
and receive  the equation of second degree for $r$ :

$-5260289575280440614252321193027166875 + $

$ 392359179683252386906081036910885000\cdot r + $
 
$ 18514574028136616634982730304244992\cdot r^2 = 0$

This equation has two rational roots :

$r_1 = (-860842465688650025/28219244579737376)$,

$r_2 = 6110629743471536675/656097436478893992$. 
  
The simple computing show that  $r_1 = (-860842465688650025/28219244579737376)$, correspond to solution

$(x_1=(-3535404127283) , y_1=(-132758926000) , z_1=(3343735015475) , w_1=(-2363831080408))$

and the $r_2 = 6110629743471536675/656097436478893992$, correspond to solution

$(x_2=(-8450072351) , y_2=520471467675 , z_2=487934246375 ,w_2=359481958899)$

So in accordance with our construction these are the points that we are seeking:

$(x_2=(-8450072351) , y_2=520471467675 , z_2=487934246375 ,w_2=359481958899)$

and 

$(x_1=(-3535404127283) , y_1=(-132758926000) , z_1=(3343735015475) , w_1=(-2363831080408))$ the pair point.

Using the program Wolfram Mathematika 7+  we can calculate the parametric solution and its pair parametric solution.

If $E$ is the point on the family of curves corresponding to parametric solution of Leonard Euler,the parametric solution 
corresponding to the point $E\cdot E$ is:

$x = (s^{13} + s^{12}\cdot t + 2\cdot s^{11}\cdot t^2 - 4\cdot s^{10}\cdot t^3 - 3\cdot s^9\cdot t^4 + 3\cdot s^8\cdot t^5 + 
     7\cdot s^7\cdot t^6 + 4\cdot s^6\cdot t^7 - 12\cdot s^5\cdot t^8 - 6\cdot s^4\cdot t^9 + 5\cdot s^3\cdot t^{10} - 
     s^2\cdot t^{11} + s\cdot t^{12} + t^{13})$ 
     
$y = (-s^{13} + s^{12}\cdot t + s^{11}\cdot t^2 + 
     5\cdot s^{10}\cdot t^3 + 6\cdot s^9\cdot t^4 - 12\cdot s^8\cdot t^5 - 4\cdot s^7\cdot t^6 + 7\cdot s^6\cdot t^7 - 
     3\cdot s^5\cdot t^8 - 3\cdot s^4\cdot t^9 + 4\cdot s^3\cdot t^{10} + 2\cdot s^2\cdot t^{11} - s\cdot t^{12} + 
     t^{13})$
     
$z =  (-(s^{13} + s^{12}\cdot t - s^{11}\cdot t^2 + 5\cdot s^{10}\cdot t^3 - 6\cdot s^9\cdot t^4 - 
       12\cdot s^8\cdot t^5 + 4\cdot s^7\cdot t^6 + 7\cdot s^6\cdot t^7 + 3\cdot s^5\cdot t^8 - 3\cdot s^4\cdot t^9 - 
       4\cdot s^3\cdot t^{10} + 2\cdot s^2\cdot t^{11} + s\cdot t^{12} + t^{13}))$
        
$w = (s^{13} - s^{12}\cdot t + 
     2\cdot s^{11}\cdot t^2 + 4\cdot s^{10}\cdot t^3 - 3\cdot s^9\cdot t^4 - 3\cdot s^8\cdot t^5 + 7\cdot s^7\cdot t^6 - 
     4\cdot s^6\cdot t^7 - 12\cdot s^5\cdot t^8 + 6\cdot s^4\cdot t^9 + 5\cdot s^3\cdot t^{10} + s^2\cdot t^{11} + 
     s\cdot t^{12} - t^{13})$

And the pair parametric solution :

$x = t\cdot (2\cdot s^{18} - 6\cdot s^{17}\cdot t - 3\cdot s^{16}\cdot t^2 - 3\cdot s^{15}\cdot t^3 - 9\cdot s^{14}\cdot t^4 + 
   27\cdot s^{13}\cdot t^5 + 32\cdot s^{12}\cdot t^6 - 33\cdot s^{11}\cdot t^7 - 39\cdot s^{10}\cdot t^8 + 
   27\cdot s^9\cdot t^9 + 21\cdot s^8\cdot t^{10} - 3\cdot s^7\cdot t^{11} + 2\cdot s^6\cdot t^{12} - 6\cdot s^5\cdot t^{13} - 
   12\cdot s^4\cdot t^{14} + 3\cdot s^2\cdot t^{16} + 2\cdot t^{18})$
   
$y = - s\cdot (2\cdot s^{18} + 3\cdot s^{16}\cdot t^2 - 12\cdot s^{14}\cdot t^4 + 6\cdot s^{13}\cdot t^5 + 2\cdot s^{12}\cdot t^6 + 
   3\cdot s^{11}\cdot t^7 + 21\cdot s^{10}\cdot t^8 - 27\cdot s^9\cdot t^9 - 39\cdot s^8\cdot t^{10} + 
   33\cdot s^7\cdot t^{11} + 32\cdot s^6\cdot t^{12} - 27\cdot s^5\cdot t^{13} - 9\cdot s^4\cdot t^{14} + 
   3\cdot s^3\cdot t^{15} - 3\cdot s^2\cdot t^{16} + 6\cdot s\cdot t^{17} + 2\cdot t^{18})$  
   
$z = s\cdot (-2\cdot s^{18} - 3\cdot s^{16}\cdot t^2 + 12\cdot s^{14}\cdot t^4 + 6\cdot s^{13}\cdot t^5 - 2\cdot s^{12}\cdot t^6 + 
   3\cdot s^{11}\cdot t^7 - 21\cdot s^{10}\cdot t^8 - 27\cdot s^9\cdot t^9 + 39\cdot s^8\cdot t^{10} + 
   33\cdot s^7\cdot t^{11} - 32\cdot s^6\cdot t^{12} - 27\cdot s^5\cdot t^{13} + 9\cdot s^4\cdot t^{14} + 
   3\cdot s^3\cdot t^{15} + 3\cdot s^2\cdot t^{16} + 6\cdot s\cdot t^{17} - 2\cdot t^{18})$
   
$w = - t\cdot (2\cdot s^{18} + 6\cdot s^{17}\cdot t - 3\cdot s^{16}\cdot t^2 + 3\cdot s^{15}\cdot t^3 - 9\cdot s^{14}\cdot t^4 - 
   27\cdot s^{13}\cdot t^5 + 32\cdot s^{12}\cdot t^6 + 33\cdot s^{11}\cdot t^7 - 39\cdot s^{10}\cdot t^8 - 
   27 \cdot s^9\cdot t^9 + 21\cdot s^8\cdot t^{10} + 3\cdot s^7\cdot t^{11} + 2\cdot s^6\cdot t^{12} + 6\cdot s^5\cdot t^{13} - 
   12\cdot s^4\cdot t^{14} + 3\cdot s^2\cdot t^{16} + 2\cdot t^{18})$

\section {The algorithm to compute all rational points of Quartic $X^4 + Y^4 = Z^4 + W^4$} \label{sec:gen}

Consider the formula of the family of curves constructed in this paper:

$(2\cdot m\cdot s^2-2\cdot n\cdot s^2+r\cdot s^3-r\cdot s^2\cdot t+2\cdot m\cdot t^2+2\cdot n\cdot t^2-r\cdot s\cdot t^2+r\cdot t^3)\cdot (2\cdot (s+t)\cdot (m^2-n\cdot r\cdot s + n\cdot r \cdot t))$ = $(-2\cdot m\cdot s^2+2\cdot n\cdot s^2+r\cdot s^3+r\cdot s^2\cdot t+2\cdot m\cdot t^2+2\cdot n\cdot t^2-r\cdot s\cdot t^2-r\cdot t^3)\cdot (2\cdot (s-t)\cdot (n^2-m\cdot r\cdot s - m\cdot r \cdot t))$.
 
We know that this family contains a point that is the image of solution of Leonard Euler for every $s$ and $t$.

We proved that for every rational point from Quartica there exist values $ m,n,s,t$ , so that the image of this point belong to
some cubic curve from the family of cubic curves.

It is easy to see that the formula of family curves contains a variable $r$ only in the first and second degree.

ALGORITHM

(1)Substituting into the formula of the family of curves all possible combinations of values of the variables $ m,n,s,t$
every time we get a second degree equation for $r$.If the discriminant of this equation is a full square we get two pairs of solutions on the family of cubic curves and find two solutions on Quartica. Since the variables $m,n,s,t$ run through all possible integers we cover images of all integer points on Quartica, provided that the discriminant of a quadratic equation for $r$ is full square.If the discriminant equation for $r$ not a full square, we shall now proceed to the second step of the algorithm.

(2)If the discriminant equation for $r$ is not a full square we get two conjugate solution on cubic curve in a expansion of the second degree for some numerical values of $s$ and $t$.But this curve also contains the image of Euler's solution for all values of $s$ and $t$ thus for $s$ and $t$ we selected.So we have three points on the cubic curve: a rational point and two conjugate points of expansion of second degree. We take a line through a rational point and one of the two conjugate points.
And get a new point on the cubic curve in expansion of second degree. 
Our cubic curve contains the point conjugate to the point we constructed. Taking a line through new conjugate points we get a rational point on the curve and then find its image on Quartica.

EXAMPLE.

Consider the family of cubiks constructed in the previous chapter:


$(2\cdot m\cdot s^2-2\cdot n\cdot s^2+r\cdot s^3-r\cdot s^2\cdot t+2\cdot m\cdot t^2+2\cdot n\cdot t^2-r\cdot s\cdot t^2+r\cdot t^3)\cdot (2\cdot (s+t)\cdot (m^2-n\cdot r\cdot s + n\cdot r \cdot t))$ = $(-2\cdot m\cdot s^2+2\cdot n\cdot s^2+r\cdot s^3+r\cdot s^2\cdot t+2\cdot m\cdot t^2+2\cdot n\cdot t^2-r\cdot s\cdot t^2-r\cdot t^3)\cdot (2\cdot (s-t)\cdot (n^2-m\cdot r\cdot s - m\cdot r \cdot t))$.

We substitute:$m=1,n=1,s=1,t=13$,receive equation for $r$:$(r - (13\cdot i/84))\cdot (r + (13\cdot i/84))$,the discriminant equation for $r$ is not a complete square,we receive two solutions of cubic curve in variables $m, n, r$ in a expansion of the second degree :$m=1,n=1,r=(13\cdot i/84)$ and $m=1,n=1,r=-(13\cdot i/84)$.Our first point is:$m_1=1,n_1=1,r_1=(13\cdot i/84)$
To receive the second point we must  get the image of Euler solution for $s=1,t=13$,to solve the system of equations:
$\left\{\begin{array}{clrr} 2\cdot m - 2\cdot n + r - r\cdot 13 + 2\cdot m\cdot 13^2 + 2\cdot n\cdot 13^2 - r\cdot 13^2 + r\cdot 13^3 == 0, \\
 -2\cdot m + 2\cdot n + r + r\cdot 13 + 2\cdot m\cdot 13^2 + 2\cdot n\cdot 13^2 - r\cdot 13^2 - r\cdot 13^3 == 0\end{array} \right.$.

Solving this system we receive the second point on the cubic curve:$m_2=(-92232 /169),n_2=92316 /169,r_2=1$.

Now we  look for a new point on the following cubic curve :$(m_1\cdot k+m_2),(n_1\cdot k+n_2),(r_1\cdot k+r_2)$
When substituting these expressions into the equation of cubic curve we obtain for $k$ the equation of the first degree:

$(69723384192/13 - 7112448\cdot i)  - (733824 - 9766848\cdot i)\cdot k = 0$

Solving this equation we obtain: $k = (7056/169 + 546\cdot i)$

Next, we calculate a new point on the cubic curve in the expansion $i$: $m_3=-504 + 546\cdot i , n_3=588 + 546\cdot i , r_3=(-167/2 + 84\cdot i/13)$.Obviously that point $m_3=-504 - 546\cdot i , n_3=588 - 546\cdot i , r_3=(-167/2 - 84\cdot i/13)$
also belong to the cubic curve. Taking a line through these two points we get a new rational point on the curve.

This is the point : $M=(-2450514024/4855033) , N= 2851182012/4855033$ ,

 $R= (-810875183/9710066)$

If we now calculate a point on the Quartica for a given point on the curve, we get:

               $31557461^4 + 2941868^4 = 1827989^4 + 31557968^4$

the pair of this point is:

                   $324997193816543^4 + 283678931194359^4 =  
                     329177166160259^4 + 277041948785757^4$


\begin{thebibliography}{SwD}

\bibitem[Bi1]{Bi1}
H.P.F.Swinnerton-Dyer, {\it Application of algebraic geometry to number theory },Number theory institute(Proc.Sympos.Pure Math.,Vol XX,State Univ.New York.Stony Brook.N.Y.,1969)Amer.Math.Soc.,Providence,R.I.,1971 {\bf 4} (1969) pp.1--52.

\bibitem[Bi2]{Bi2}
R.K.Guy., {\it Sums of Like Powers. Euler's Conjecture" and "Some Quartic Equations}, Unsolved problems of Number theory,New York: Springer-Verlag, , 1994.
{\bf 265} (1994) pp. 139-144 and 192-193.

\bibitem[Hr]{Hr}
G.H.Hardy, {\it Ramanujan Twelve lectures on subjects suggested his life and work,3rd.ed.}, New York: Chelsea. {\bf 13} (1999). 

\bibitem[E]{E}
R. L. Ekl, {\it New results in equal sums of like powers}, Math.
Comput. {\bf 67} (1998) 1309--1315.

\bibitem[LP]{LP}
L. J. Lander and T. R. Parkin, {\it A counterexample to Euler's
sum of powers conjecture}, Math. Comput. {\bf 21} (1967) 101--103.

\bibitem[LPS]{LPS}
L. J. Lander, T. R. Parkin, and J. L. Selfridge, {\it A survey of
equal sums of like powers}, Math. Comput. {\bf 21} (1967)
446--459.

\bibitem[Wa]{Wa}
B. L. van der Waerden, {\it Algebra, volume 1}, {\bf 49} paragraph 5.7

\end{thebibliography}
\end{document}